\newtheorem{Theorem}{Theorem } 
\newtheorem{lemma}{Lemma}
\font\ff=cmsy10 
\def\tiF{\text{\ff F\kern 0pt}{\;}^{ -1}} 
\def\tF{\text{\ff F\kern 0pt}} 
\begin{document} 
\title[]{On the support of solutions to the \\Zakharov-Kuznetsov equation} 
\author{Eddye Bustamante, Pedro Isaza and Jorge Mej\'{\i}a}  
\thanks{With the support of  Colciencias, Colombia, Project 574-2009} 
\subjclass[2000]{35Q53, 37K05}
  
\keywords{Nonlinear dispersive equations, estimates of Carleman type} 
\address{Eddye Bustamante M., Pedro Isaza J., Jorge Mej\'{\i}a L. \newline  
Departamento de Matem\'aticas\\Universidad Nacional de Colombia\newline  
A. A. 3840 Medell\'{\i}n, Colombia} 
\email{eabusta0@unal.edu.co, pisaza@unal.edu.co, jemejia@unal.edu.co}

\begin{abstract} 
In this article we prove that sufficiently smooth solutions of the Zakharov-Kuznetsov equation:
\[ 
\partial _{t}u+\partial _{x}^{3}u+\partial_x\partial^2_yu+u\partial _{x}u =0 \,,
\]
that have compact support for two different times are identically zero.
\end{abstract} 
\maketitle

\section{Introduction}
In this article we consider the Zakharov-Kuznetsov equation
\begin{equation}
\partial_{t}u+\partial_{x}^{3}u+\partial_{x}\partial_{y}^{2}u+u\partial_{x}u=0 \text{,}\quad (x,y)\in\mathbb R^2, \;t\in[0,1]\;.\label{ZK}
\end{equation}
Equation \eqref{ZK} is a bidimensional generalization of the Korteweg-de Vries (KdV) equation which is a mathematical model to describe the propagation of nonlinear
ion-acoustic waves in magnetized plasma (\cite{ZaKu}).

Our goal in this article is to prove that a sufficiently smooth solution $u=u(x,y,t)$ of \eqref{ZK} which has compact support at two different times must vanish identically. Results concerning local and global well-posedness for the Cauchy problem associated to equation \eqref{ZK} can be found in the articles, \cite{F}, \cite{BL}, \cite{LP1}, \cite{LPS}, and \cite{LP2}.

In \cite{SS}, Saut and Scheurer proved a result concerning a general class of dispersive-dissipative equations, including the KdV equation, which afirms that if a sufficiently smooth solution $u=u(x,t)\,,\;x\in\mathbb{R}^n\,,\;t\in\mathbb{R}$, of this type of equation, vanishes in a nonempty open set of $\mathbb{R}^n\times\mathbb{R}$, then it is identically zero.\par
 
Kenig, Ponce and Vega in \cite{KPV} proved that if a sufficiently smooth solution $u$ of the KdV equation is such that for some $B\in\mathbb{R}$, and two different times $t=0$ and $t=1$,
\begin{equation}\text{supp}\;u(\cdot,0)\,,\;\;\text{supp}\;u(\cdot,1)\subset(-\infty,B]\,,\label{soporte}\end{equation}
then $u\equiv 0$. First of all, they observed that with this condition on the support at time $t=0$, the solution presents exponential decay to the right $(x>0)$ for every $t>0$, which enables the use of a Carleman type estimate in order to show that the solution is zero in a half-strip $[R,+\infty)\times [0,1]$. In particular, $u$ vanishes in a nonempty open set of $\mathbb{R}\times[0,1]$, which permits to apply Saut-Scheurer's result to conclude that $u\equiv 0$.\par
Using refinements of the method in \cite{KPV}, unique continuation principles have been successively improved for the KdV and Schr\"odinger equations (see for example \cite{EKPV1} and \cite{EKPV2}).\par
In \cite{B}, Bourgain introduced an approach, based on Complex Analysis methods, to prove that if sufficiently smooth solutions of certain dispersive equations, including the KdV equation, are compactly supported on a nontrivial time interval, then they are identically zero.\par
Although the result in \cite{B} is weaker, in the KdV case, than that in \cite{SS}, unlike Saut and Scheurer's result, Bourgain's result can be obtained for the Zakharov-Kuznetsov equation. In fact, Panthee in \cite{P} proved the following result:

\begin{Theorem}\label{Panthee} Let $u\in C([0,1];H^4(\mathbb{R}^2))$ be a solution of  equation \eqref{ZK} such that for some $B>0$
\begin{equation}\text{supp}\;u(t)\subset[-B,B]\times[-B,B]\quad\forall t\in[0,1]\;.\label{pan}\end{equation}
Then $u\equiv 0$.
\end{Theorem}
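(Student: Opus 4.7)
The plan is to adapt Bourgain's complex-analytic method to equation \eqref{ZK}. Taking the spatial Fourier transform and writing the dispersion symbol $\phi(\xi,\eta):=\xi^3+\xi\eta^2$, the equation becomes
\begin{equation*}
\partial_t\hat{u}(\xi,\eta,t) - i\phi(\xi,\eta)\hat{u}(\xi,\eta,t) + \tfrac{i\xi}{2}\widehat{u^2}(\xi,\eta,t) = 0,
\end{equation*}
which by Duhamel's principle yields
\begin{equation*}
\hat{u}(\xi,\eta,t) = e^{it\phi(\xi,\eta)}\hat{u}(\xi,\eta,0) - \tfrac{i\xi}{2}\int_0^t e^{i(t-s)\phi(\xi,\eta)}\widehat{u^2}(\xi,\eta,s)\,ds.
\end{equation*}
Because of \eqref{pan}, both $u(\cdot,t)$ and $u^2(\cdot,t)$ are supported in $[-B,B]^2$ and belong to $L^1\cap L^\infty$ (using $H^4(\mathbb{R}^2)\hookrightarrow L^\infty$), so the Paley--Wiener theorem extends $\hat{u}$ and $\widehat{u^2}$ to entire functions on $\mathbb{C}^2$, with
\begin{equation*}
\bigl|\hat{u}(\xi,\eta,t)\bigr|+\bigl|\widehat{u^2}(\xi,\eta,t)\bigr| \leq C\, e^{B(|\mathrm{Im}\,\xi|+|\mathrm{Im}\,\eta|)}, \quad t\in[0,1],
\end{equation*}
uniformly in $t$.

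The key step is to exploit these two pieces of information at complex frequencies where $e^{it\phi}$ grows dramatically. Setting $\xi=i\lambda$, $\eta=0$ with $\lambda>0$, one has $\phi(i\lambda,0)=-i\lambda^3$, so $e^{it\phi}=e^{t\lambda^3}$. Evaluating the Duhamel identity at $t=1$ and dividing by $e^{\lambda^3}$ gives
\begin{equation*}
\hat{u}(i\lambda,0,0) = e^{-\lambda^3}\hat{u}(i\lambda,0,1) - \tfrac{\lambda}{2}\int_0^1 e^{-s\lambda^3}\widehat{u^2}(i\lambda,0,s)\,ds.
\end{equation*}
The first term on the right is $O(e^{-\lambda^3+B\lambda})$, and the integral, after splitting $[0,1]$ near $s=0$ and away from it and using the $O(e^{B\lambda})$ bound on $\widehat{u^2}$, is $O(\lambda^{-2}e^{B\lambda})$. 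Hence $|\hat{u}(i\lambda,0,0)|\le C\lambda^{-2}e^{B\lambda}$ as $\lambda\to\infty$, which is strictly sharper than the raw Paley--Wiener bound. Running the same argument along the other half-lines (in particular $\xi=-i\lambda$, which becomes $e^{t\lambda^3}$ after time-reversal) produces analogous super-Paley--Wiener decay as $|\lambda|\to\infty$ in every imaginary direction of $\xi$.

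The main obstacle, and the core of the argument, is to upgrade these one-dimensional directional estimates into the statement that the entire function $\hat{u}(\cdot,\cdot,0)$ on $\mathbb{C}^2$ vanishes identically. This requires a Phragm\'en--Lindel\"of / Liouville-type result for entire functions of exponential type: a function satisfying the Paley--Wiener bound that nevertheless decays faster than $e^{B|\lambda|}$ along a sequence of imaginary rays must be zero. The two-dimensional character of the problem adds a technical complication, since one must also handle the $\eta$ variable; the saving grace is that $\phi(\xi,\eta)=\xi(\xi^2+\eta^2)$ keeps $\mathrm{Im}\,\phi$ large negative as long as $\xi$ is taken purely imaginary, so the $\eta$-dependence can be absorbed by slicing $\mathbb{C}^2$ and applying the one-variable complex-analytic argument on each slice. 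Once $\hat{u}(\cdot,\cdot,0)\equiv 0$, i.e.\ $u(\cdot,0)\equiv 0$, the uniqueness part of the well-posedness theory for \eqref{ZK} (or simply iterating the Duhamel formula) gives $u\equiv 0$ on $[0,1]$.
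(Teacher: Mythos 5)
The paper does not prove this statement itself: Theorem \ref{Panthee} is quoted from Panthee \cite{P}, whose proof adapts Bourgain's complex-analytic method \cite{B}. Your setup (spatial Fourier transform, Paley--Wiener extension of $\hat u(\cdot,t)$ and $\widehat{u^2}(\cdot,t)$ to entire functions of exponential type $B$, Duhamel formula at complex frequencies) is indeed the correct starting point of that argument. The fatal problem is the step you yourself flag as ``the core of the argument'': the Phragm\'en--Lindel\"of/Liouville principle you invoke is false. An entire function of exponential type $B$ satisfying the Paley--Wiener bound and decaying faster than $e^{B\lambda}$ along imaginary rays need not vanish: the function $\zeta\mapsto\bigl(2\sin(B\zeta/2)/\zeta\bigr)^{2}$ is the Fourier transform of the nonzero triangle function supported in $[-B,B]$ and satisfies precisely your improved bound, $|f(i\lambda)|\leq \lambda^{-2}e^{B\lambda}$. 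More generally, a gain of the form $e^{(B-\epsilon)|\mathrm{Im}\,\zeta|}$ in \emph{all} complex directions would, by Paley--Wiener, only shrink the support of $u(0)$ to $[-B+\epsilon,B-\epsilon]^2$; to force $u(0)\equiv 0$ you would need decay like $e^{-N\lambda}$ for every $N$, and the single polynomial factor $\lambda^{-2}$ produced by one application of the Duhamel integral is nowhere near that. A secondary error: for $\xi=i\lambda$ and real $\eta$ one has $\phi(i\lambda,\eta)=i\lambda(\eta^{2}-\lambda^{2})$, so $\mathrm{Im}\,\phi$ is \emph{positive} (the propagator decays instead of growing) as soon as $|\eta|>\lambda$; the proposed slicing in $\eta$ therefore loses the crucial growth factor on all but a bounded portion of each slice.

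The actual Bourgain--Panthee argument runs in the opposite asymptotic regime: the imaginary part $\lambda$ is taken \emph{small}, so that the Paley--Wiener factor is $e^{B\lambda}=1+O(\lambda)$, while the strong growth or decay of $|e^{it\phi(\zeta)}|$ at large real frequencies is exploited by solving the Duhamel formula forward or backward in time according to the sign of $\mathrm{Im}\,\phi(\zeta)$ (the same device used in the proof of Theorem \ref{Carleman} in this paper). One then bootstraps a bound on $\sup_{t}\sup_{|\mathrm{Im}\,\zeta|\leq\lambda}|\hat u(\zeta,t)|$ into an inequality of the form $c\leq\epsilon(\lambda)\,c$ with $\epsilon(\lambda)\to 0$, which forces $c=0$. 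As written, your sketch cannot be completed along the route you propose; the concluding complex-analytic lemma it relies on does not exist.
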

In our work we will only require condition \eqref{pan} for two different times. More precisely, we prove the following result.

\begin{Theorem}\label{main} 
Let $u\in C([0,1];H^{4}(\mathbb{R}^{2}))\, \cap \,C^{1}([0,1];L^{2}(\mathbb{R}^{2}) )$ be a solution of  \eqref{ZK} such that, for some $B>0$,
\[
supp~u( 0),\;supp~u( 1)  \subseteq [-B,B]\times [-B,B]. 
\]
Then, $u\equiv 0.$
\end{Theorem}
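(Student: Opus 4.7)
The overall plan is to reduce to Panthee's Theorem \ref{Panthee} by showing that the hypothesis of compact support at only the two times $t=0$ and $t=1$ already forces the solution to be uniformly compactly supported on the whole interval $[0,1]$; once this is established the conclusion follows immediately. To implement this reduction I would follow the two-step scheme introduced by Kenig, Ponce and Vega for KdV, adapted to the two-dimensional dispersive operator of ZK.

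Step 1 (exponential spatial decay from the support at $t=0$). Starting from the compact support of $u(\cdot,0)$, I would perform weighted $L^2$ energy estimates against a family of weights of the form $\phi_{\lambda,\varepsilon}(x,y)=e^{\lambda\rho_\varepsilon(x,y)}$, where $\rho_\varepsilon$ is a smooth, bounded, convex approximation of a linear function, say $x$ (and, by symmetry, $-x$, $y$, $-y$). Multiplying the equation by $\phi_{\lambda,\varepsilon}^2 u$ and integrating by parts, the third-order dispersive terms produce commutators with kernel $O(\lambda^3\phi_{\lambda,\varepsilon}^2)$, while the nonlinearity $u\partial_xu$ is absorbed through the a priori $H^4$ control and Sobolev embedding. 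Letting $\varepsilon\to 0$ after a Gronwall argument gives $\sup_{t\in[0,1]}\|e^{\lambda\rho}u(t)\|_{L^2}<\infty$ for every $\lambda>0$ and each of the four directions, so $u$ decays faster than any exponential in $(x,y)$ uniformly on $[0,1]$.

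Step 2 (Carleman estimate and trapping). I would then derive a Carleman-type inequality of the form
\[
\|e^{\lambda\varphi}w\|_{L^2(\mathbb{R}^2\times[0,1])}\le C\|e^{\lambda\varphi}(\partial_t+\partial_x^3+\partial_x\partial_y^2)w\|_{L^2(\mathbb{R}^2\times[0,1])}
\]
for $w\in C_c^\infty(\mathbb{R}^2\times[0,1])$ and $\lambda$ sufficiently large, with a weight $\varphi(x,y,t)$ linear in $t$ and strictly convex in $(x,y)$. The proof goes by conjugating $L=\partial_t+\partial_x^3+\partial_x\partial_y^2$ by $e^{\lambda\varphi}$, splitting the conjugate operator into its $L^2$-symmetric and antisymmetric parts $S+A$, and selecting $\varphi$ so that the commutator $[S,A]$ is coercive on enough derivatives to absorb the lower-order terms generated by the conjugation. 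Applying this estimate to $\chi(x,y)u(x,y,t)$, where $\chi$ is a smooth cutoff vanishing on a large ball, using Step 1 to bound the contribution from $t=0$ and the support hypothesis of $u(\cdot,1)$ to annihilate the contribution from $t=1$, and letting $\lambda\to\infty$, one concludes that $u$ must vanish outside a bounded region, uniformly in $t\in[0,1]$.

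Applying Step 2 with the weight $\varphi$ successively oriented along the four coordinate directions yields $\operatorname{supp}u(t)\subset[-B',B']\times[-B',B']$ for every $t\in[0,1]$, and Theorem \ref{Panthee} then finishes the proof. The principal technical obstacle is the construction and bookkeeping of the Carleman weight adapted to the symbol $i\xi(\xi^2+\eta^2)$ of ZK: in contrast with the KdV case, where a linear weight $\beta x$ essentially suffices, here the mixed third-order derivative $\partial_x\partial_y^2$ must interact favourably with the Hessian of $\varphi$, which dictates the class of admissible weights and controls the quantitative bounds needed to make both the weighted energy estimate of Step 1 and the trapping argument of Step 2 close.
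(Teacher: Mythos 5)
Your overall architecture --- exponential decay at all times from the two-time support hypothesis, then a Carleman estimate to trap the support uniformly in $t$, then Panthee's theorem --- is exactly the paper's. But the heart of the matter is the Carleman estimate itself, and there you have a genuine gap: you propose an inequality $\|e^{\lambda\varphi}w\|\le C\|e^{\lambda\varphi}Lw\|$ for \emph{large} $\lambda$ with a weight strictly convex in $(x,y)$, to be proved by the symmetric/antisymmetric splitting and positivity of the commutator $[S,A]$, and you yourself flag the construction of such a weight for the symbol of $\partial_x^3+\partial_x\partial_y^2$ as an unresolved ``principal technical obstacle.'' The paper shows none of this machinery is needed: it uses the purely \emph{linear} weight $e^{\lambda x}$ (resp.\ $e^{\lambda y}$), for which the conjugated equation diagonalizes under the spatial Fourier transform into an ODE in $t$ at each frequency $\xi$, with damping coefficient $a(\xi)=\lambda^3-3\lambda\xi_1^2-\lambda\xi_2^2$. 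Integrating forward from $t=0$ when $a(\xi)\le 0$ and backward from $t=1$ when $a(\xi)>0$ yields
\[
\|e^{\lambda x}w\|\le\|e^{\lambda x}w(0)\|_{L^2}+\|e^{\lambda x}w(1)\|_{L^2}+\|e^{\lambda x}(w'+\partial_x^3w+\partial_x\partial_y^2w)\|
\]
for \emph{all} $\lambda\ne 0$, with constant $1$ and explicit boundary terms. The trapping argument then needs only the comparison of $e^{2\lambda R}$ against $e^{\lambda(R+1)}$ as $\lambda\to\infty$; no convexity and no lower bound on $\lambda$ enter. As written, your proposal replaces the one step that actually requires an idea by a harder, unconstructed estimate.

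Two further points. First, in your Step 2 the right-hand side $e^{\lambda\varphi}L(\chi u)$ contains the nonlinear term $e^{\lambda\varphi}\chi u\partial_xu$, which carries the same weight as the left-hand side and cannot be discarded; the paper absorbs it by using the decay of Theorem \ref{decay} to get $|\partial_xu(t)(x,y)|\le C_1e^{-x}$ and then choosing $R$ with $C_1e^{-R}<\tfrac12$. Your proposal uses Step 1 only to control the $t=0$ boundary contribution and never addresses this absorption, without which the argument does not close. Second, in Step 1 you claim decay in the $-x$ direction ``by symmetry'' from the support of $u(\cdot,0)$ alone; the equation is not invariant under $x\mapsto -x$ without also reversing time, so leftward decay genuinely requires the support hypothesis at $t=1$ together with the substitution $(x,y,t)\mapsto u(-x,y,1-t)$ --- this is why the theorem needs compact support at \emph{two} times rather than one.
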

The proof of theorem \ref{main} follows the ideas of Kenig, Ponce and Vega in \cite{KPV}. In first place, we observe that if the solutions of the Zakharov-Kuznetsov equation have exponential decay for $x>0$ and $y\in\mathbb{R}$ at time $t=0$, and exponential decay for $x<0$ and $y\in\mathbb{R}$ at time $t=1$, then these solutions have exponential decay as $x^2+y^2$ goes to infinity at all times $t\in[0,1]$. This fact allows us to use a Carleman estimate of $L^2-L^2$ type, in order to establish that for the function $u$ in Theorem \ref{main} there exists $B>0$ such that $\text{supp}\;u(t)\subset[-B,B]\times[-B,B]$ for all $t\in[0,1]$. In this manner, by Theorem \ref{Panthee}, $u\equiv 0$.\par 

From now on, we will say that $f \in H^k (e^{2 \beta x} dx dy)$ if $\partial ^\alpha f \in L^2(e^{2\beta x}dx dy)$ for all multi-index $\alpha=(\alpha_1,\alpha_2)$ with $|\alpha|\leq k$. In a similar way we define $H^k (e^{2 \beta x} e^{2 \beta y}dx dy)$.

The decay property of the solutions of the Zakharov-Kuznetsov equation, mentioned before, plays a central role in this article and it is proved in the following theorem:

\begin{Theorem}\label{decay}
Let $u\in C( [0,1];H^{4}( \mathbb{R}^{2}) ) \cap C^{1}( [0,1];L^{2}( \mathbb{R}^{2}) ) $ be a solution of \eqref{ZK} such that for all $\beta >0$, $u( 0) \in L^{2}( e^{2\beta x}e^{2\beta |y|}dxdy)$ and $u( 1) \in L^{2}( e^{-2\beta x}e^{2\beta |y|}dxdy$). Then $u$ is a bounded function from $[0,1]$ with values in $H^{3}( e^{2\beta | x|}e^{2\beta |y| }dxdy) $ for all $\beta >0$.
\end{Theorem}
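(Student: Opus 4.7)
\emph{Reduction.} The plan is to reduce Theorem~\ref{decay} to a one-sided \emph{forward} propagation statement and then use symmetry for the backward half. The reflection $v(x,y,t):=u(-x,y,1-t)$ sends a solution of \eqref{ZK} to another solution of \eqref{ZK}, and a change of variables shows that the hypothesis on $u(1)$ is equivalent to $v(0)\in L^2(e^{2\beta x}e^{2\beta|y|}dxdy)$ for every $\beta>0$. Hence it suffices to prove: \emph{if $u(0)\in L^2(e^{2\beta x}e^{2\beta|y|}dxdy)$ for every $\beta>0$, then $u$ is a bounded function from $[0,1]$ into $H^3(e^{2\beta x}e^{2\beta|y|}dxdy)$ for every $\beta>0$}. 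Applying this statement to both $u$ and $v$ and using $e^{2\beta|x|}\le e^{2\beta x}+e^{-2\beta x}$ yields the full conclusion.

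\emph{Weighted $L^2$ estimate.} I would establish the forward statement by weighted energy estimates. Let $\varphi_N$ be a smooth, bounded, and nondecreasing-in-$x$ approximation of $\beta x+\beta\sqrt{y^2+1}$, chosen so that $\varphi_N$ and all its derivatives are uniformly bounded in $N$ and $e^{2\varphi_N}\nearrow e^{2\beta x+2\beta\sqrt{y^2+1}}$ pointwise. Multiplying \eqref{ZK} by $2e^{2\varphi_N}u$ and performing the two standard integrations by parts for each of $u\partial_x^3u$ and $u\partial_x\partial_y^2u$ yields an identity of the form
\[
\frac{d}{dt}\!\int e^{2\varphi_N}u^2\,dxdy+\int e^{2\varphi_N}\bigl(6\varphi_{N,x}u_x^2+2\varphi_{N,x}u_y^2\bigr)\,dxdy=\int e^{2\varphi_N}\bigl(A_N u^2+D_N u^3\bigr)\,dxdy,
\]
where the quadratic gradient terms on the left are nonnegative (since $\varphi_{N,x}\ge 0$) and the coefficients $A_N,D_N$ are polynomials in the derivatives of $\varphi_N$ of order $\le 3$, hence uniformly bounded in $N$. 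Discarding the nonnegative left-hand contribution, using $\|u\|_{L^\infty}\le C\|u\|_{H^4}$ to absorb the cubic term, and applying Gronwall's inequality together with monotone convergence as $N\to\infty$ gives the desired weighted $L^2$ bound on $[0,1]$.

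\emph{Upgrade to $H^3$.} The same procedure applied to $\partial^\alpha u$ for each $|\alpha|\le 3$, using the equation $\partial_t\partial^\alpha u+\partial_x^3\partial^\alpha u+\partial_x\partial_y^2\partial^\alpha u=-\partial^\alpha(uu_x)$, propagates the weighted bound up the Sobolev scale. The worst nonlinear contribution, schematically $\int e^{2\varphi_N}(\partial^\alpha u)u\partial^4u$ when $|\alpha|=3$, is treated by a symmetrizing integration by parts that trades the fourth-derivative factor for $(\partial^\alpha u)^2$, producing a term bounded by $\|u\|_{H^4}\|e^{\varphi_N}\partial^\alpha u\|_{L^2}^2$ via $H^2(\mathbb R^2)\hookrightarrow L^\infty$. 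A triangular Gronwall system in $|\alpha|$ then closes the bound. The weighted $H^3$ norm of the initial datum, which is not directly hypothesised, is recovered from $u(0)\in H^4(\mathbb R^2)\cap L^2(e^{2\beta x}e^{2\beta|y|}dxdy)$ by a Paley--Wiener / three-line interpolation argument, which yields $u(0)\in H^3(e^{2\beta' x}e^{2\beta'|y|}dxdy)$ for every $\beta'<\beta$---enough, since $\beta$ is arbitrary.

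\emph{Main obstacle.} I expect the main difficulty to be the bookkeeping of commutator terms at the $H^3$ level and the verification that each of them remains uniformly bounded in $N$. In particular, at each derivative level one must isolate those pieces of $\partial^\alpha(uu_x)$ that would otherwise require weighted bounds on fourth derivatives of $u$, and remove them by a symmetrization that rests on the specific algebraic structure of the dispersive pair $\partial_x^3+\partial_x\partial_y^2$. The nonsmoothness of $|y|$ at the origin (which forces working with the surrogate $\sqrt{y^2+1}$ and then passing to a second limit) is the other delicate point, since one must make sure that no second $y$-derivative of the weight survives as an unbounded coefficient.
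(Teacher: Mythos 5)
Your overall skeleton (truncated weighted energy estimate, Gronwall, monotone convergence, then the reflection $u(-x,y,1-t)$ for the backward half) is the same as the paper's, but there is a genuine gap in the central energy identity. When the weight $\varphi_N$ depends on \emph{both} $x$ and $y$, the term $2\int e^{2\varphi_N}u\,\partial_x\partial_y^2u$ produces, after integration by parts, a cross term $4\int e^{2\varphi_N}\varphi_{N,y}\,u_x u_y$ which is absent from the identity you wrote down: it is neither one of your nonnegative gradient terms nor of the form $A_Nu^2$. It cannot simply be discarded, and with a weight built by truncating $x$ and $y$ independently it is not dominated by $6\varphi_{N,x}u_x^2+2\varphi_{N,x}u_y^2$, because in the region where the $x$-truncation is active ($\varphi_{N,x}=0$) but $|\varphi_{N,y}|=\beta$ the good terms vanish while the cross term does not. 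This cross term is exactly the point where the Zakharov--Kuznetsov operator differs from KdV, and the paper's proof is structured around it: it first proves decay with a weight in $x$ only (no cross term arises), then sets $w=e^{\beta x}u$ with the \emph{full} exponential in $x$ and truncates only in $y$, so that the cross term $\int(\partial_yw)(\partial_xw)\phi_n'(y)$ is absorbed by completing the square against the constant-coefficient positive terms $3\beta\int(\partial_xw)^2\phi_n+\beta\int(\partial_yw)^2\phi_n$, since $3\beta a^2+\beta b^2-2\beta ab\geq\beta(a-b)^2\geq0$. Your argument could be repaired by truncating jointly so that $|\varphi_{N,y}|\leq\varphi_{N,x}$ pointwise, but as stated the identity is incorrect and the estimate does not close.

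Separately, your ``main obstacle'' --- the commutator bookkeeping needed to push the weighted bound from $L^2$ to $H^3$ by differentiating the equation --- is entirely avoided in the paper. You already mention the relevant tool but apply it only to the initial datum: the three-line interpolation inequality $\|J^{\theta s}(e^{(1-\theta)\beta x}f)\|_{L^2}\leq C\|J^sf\|_{L^2}^{\theta}\|e^{\beta x}f\|_{L^2}^{1-\theta}$ holds at \emph{every} time $t$, so once $u(t)$ is bounded in $L^2(e^{2\beta x}e^{2\beta y}dxdy)$ for all $\beta>0$ and in $H^4$, taking $s=4$, $\theta=3/4$ immediately gives boundedness in $H^3(e^{2\beta' x}e^{2\beta' y}dxdy)$ for all $\beta'>0$, with no differentiated energy estimates, no symmetrization of $\int e^{2\varphi}(\partial^\alpha u)\,u\,\partial^4u$, and no triangular Gronwall system. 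You should adopt that shortcut; it also removes any regularity concerns about justifying third-order energy identities for an $H^4$ solution.
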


The Carleman's type estimates are proved in the following theorem:
\begin{Theorem}\label{Carleman}
Let $w\in C([0,1];H^{3}(\mathbb{R}^{2}))\,\cap\, C^{1}([0,1];L^{2}(\mathbb{R}^{2}))$, be a function such that for all $\beta >0$

(i)\  $w$ is bounded from $[0,1]$ with values in $H^{3}( e^{2\beta |x|}e^{2\beta |y|}dxdy)$, and 

(ii) $w'\in L^{1}([0,1];L^{2}( e^{2\beta |x|}e^{2\beta |y|}dxdy))$. 

Then, for all $\lambda \neq 0$,
\begin{equation}
\|e^{\lambda x}w\|\leq \|e^{\lambda x}w(0) \| _{L^{2}(\mathbb{R}^{2}) }+\| e^{\lambda x}w(1) \|_{L^{2}(\mathbb{R}^{2})}+\| e^{\lambda x}(w'+\partial_{x}^{3}w+\partial _{x}\partial _{y}^{2}w) \| \,,\label{EstimativoCarleman}
\end{equation}
where
$\|\cdot\|:=\|\cdot\|_{L^{2}(\mathbb{R}^{2}\times[0,1])}$.

A similar estimate also holds with $y$ instead of $x$ in the exponents.
\end{Theorem}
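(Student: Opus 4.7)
The plan is to prove the estimate by conjugating with the exponential weight and reducing to a scalar ODE in the Fourier variables. Set $f := e^{\lambda x}w$ and $g := e^{\lambda x}(w'+\partial_x^3 w+\partial_x\partial_y^2 w)$. The identity $e^{\lambda x}\partial_x e^{-\lambda x}=\partial_x-\lambda$ turns the problem into
\begin{equation*}
\partial_t f + (\partial_x-\lambda)^3 f + (\partial_x-\lambda)\partial_y^2 f = g,
\end{equation*}
and hypotheses (i) and (ii) guarantee that $f,g\in L^2(\mathbb{R}^2\times[0,1])$ with $\partial_t f\in L^1([0,1];L^2(\mathbb{R}^2))$, so the identity holds in $L^1_t L^2_{xy}$. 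It is enough to prove $\|f\|\le\|f(0)\|_{L^2(\mathbb{R}^2)}+\|f(1)\|_{L^2(\mathbb{R}^2)}+\|g\|$.

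Taking the partial Fourier transform $\widetilde{f}$ in $(x,y)$ converts this, for each fixed $(\xi,\eta)$, into
\begin{equation*}
\partial_t\widetilde{f}(\xi,\eta,t)+\bigl(\mu(\xi,\eta)+i\,\omega(\xi,\eta)\bigr)\widetilde{f}(\xi,\eta,t)=\widetilde{g}(\xi,\eta,t),
\end{equation*}
where a direct computation gives $\mu(\xi,\eta)=\lambda(3\xi^2+\eta^2-\lambda^2)$ and $\omega(\xi,\eta)=\xi(3\lambda^2-\xi^2-\eta^2)$. I split the frequency plane as $R_\pm:=\{(\xi,\eta):\pm\mu(\xi,\eta)\ge 0\}$. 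On $R_+$ propagate Duhamel forward from $t=0$; since $e^{-\mu(t-s)}\le 1$ for $0\le s\le t\le 1$, one gets $|\widetilde{f}(\xi,\eta,t)|\le|\widetilde{f}(\xi,\eta,0)|+\int_0^t|\widetilde{g}(\xi,\eta,s)|\,ds$. On $R_-$ propagate backward from $t=1$ for the analogous bound with $\widetilde{f}(\xi,\eta,1)$. Minkowski's integral inequality and Cauchy--Schwarz then yield the pointwise-in-$(\xi,\eta)$ estimate
\begin{equation*}
\|\widetilde{f}(\xi,\eta,\cdot)\|_{L^2_t([0,1])}\le|\widetilde{f}(\xi,\eta,\tau_\pm)|+\|\widetilde{g}(\xi,\eta,\cdot)\|_{L^2_t([0,1])}, \qquad (\xi,\eta)\in R_\pm,
\end{equation*}
with $\tau_+=0$ and $\tau_-=1$. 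Squaring, integrating in $(\xi,\eta)$, and applying Minkowski in $L^2_{\xi\eta}$ gives $\|\widetilde{f}\,\mathbf{1}_{R_\pm}\|\le\|\widetilde{f}(\tau_\pm)\,\mathbf{1}_{R_\pm}\|_{L^2_{\xi\eta}}+\|\widetilde{g}\,\mathbf{1}_{R_\pm}\|$.

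Combining the two regions without losing a constant requires Plancherel together with the elementary inequality
\begin{equation*}
\sqrt{(a+c)^2+(b+d)^2}\le a+b+\sqrt{c^2+d^2} \qquad(a,b,c,d\ge 0),
\end{equation*}
which follows from $ac+bd\le\sqrt{a^2+b^2}\sqrt{c^2+d^2}\le(a+b)\sqrt{c^2+d^2}$. Applying it to $a=\|\widetilde{f}(0)\,\mathbf{1}_{R_+}\|_{L^2_{\xi\eta}}$, $b=\|\widetilde{f}(1)\,\mathbf{1}_{R_-}\|_{L^2_{\xi\eta}}$, $c=\|\widetilde{g}\,\mathbf{1}_{R_+}\|$, $d=\|\widetilde{g}\,\mathbf{1}_{R_-}\|$, and using $c^2+d^2=\|g\|^2$ and $a\le\|f(0)\|_{L^2}$, $b\le\|f(1)\|_{L^2}$, delivers the required three-term bound. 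The variant with $y$ in the exponent is handled identically: conjugation by $e^{\lambda y}$ yields an operator whose symbol has real part $2\lambda\xi\eta$, and one splits according to the sign of $\xi\eta$.

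The expected obstacle is conceptual rather than computational. A classical Carleman argument based on the skew-symmetric/symmetric decomposition of the conjugated operator is bound to fail here: because the operator is constant-coefficient, the commutator of its two parts vanishes identically, and no pointwise positivity appears under integration by parts. The Fourier-side analysis above sidesteps this precisely because it needs only the positivity of $\mu$ on \emph{half} of the frequency plane, with propagation in the appropriate time direction handling the other half. Once the splitting is in place the rest is bookkeeping, with the sharp numerical inequality above the one device preventing a spurious constant in front of $\|g\|$.
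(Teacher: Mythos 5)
Your argument is essentially the paper's own proof: conjugate by $e^{\lambda x}$, take the spatial Fourier transform to get a first-order ODE in $t$ at each frequency, split the frequency plane according to the sign of the real part of the symbol (your $\mu$ is the negative of the paper's $a(\xi)$), propagate via Duhamel forward from $t=0$ or backward from $t=1$ accordingly, and conclude by Plancherel. The only differences are cosmetic --- the paper gets constant $1$ directly from the pointwise bound $|\widehat{g}(t)|\le|\widehat{g}(0)|+|\widehat{g}(1)|+\int_0^1|\widehat{h}|$ valid on all frequencies, so your two-region recombination inequality is not needed, and the paper justifies the absolute continuity of $t\mapsto\widehat{e^{\lambda x}w(t)}(\xi)$ via a separate lemma, a technical point you gloss over.
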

Our proof of \eqref{EstimativoCarleman} relies only on the Fourier transform in the space variables and on the elementary properties of absolutely continuous functions in the time variable.

The paper is organized as follows: in section $2$ we prove Theorem \ref{decay} and in section $3$ we prove Theorem \ref{Carleman}. Finally, in section $4$, using Theorem \ref{decay}, Theorem \ref{Carleman} and Theorem \ref{Panthee}, we establish Theorem \ref{main}.

Throughout this article the letter $C$ will denote diverse positive constants which may change from line to line and depend on parameters which are clearly established in each case.

\section{Apriori estimates (Proof of Theorem \ref{decay})}

The proof of Theorem \ref{decay} is based on the following lemmas.

The first lemma is an interpolation result which can be proved using the Three-line Theorem:
\begin{lemma} \label{interpolation}
For $s>0$ and $\beta>0$ let $f\in H^s(\mathbb R^2) \cap L^2(e^{2\beta x }dx dy)$. Then, for $\theta \in[0,1]$:
\begin{equation}
\| J^{\theta s} (e^{(1-\theta) \beta x}f)\|_{L^2} \leq C \|J^s f \|_{L^2}^\theta \| e^{\beta x}f\|_{L^2}^{1-\theta}, \label{interpolation.1}
\end{equation}
where $[J^sf]\,\widehat{\;}\,(\xi):=(1+|\xi|^2)^{s/2} \widehat f (\xi)$ and $C=C(s,\beta)$.

(Here, $\widehat{\;}$ denotes the spatial Fourier transform in $\mathbb R^2$, and $\xi=(\xi_1,\xi_2)$, where $(\xi_1,\xi_2)$ are the variables in the frequency space corresponding to the space variables $(x,y)$).

Similarly, if $f\in H^s(\mathbb R^2) \cap L^2(e^{2(\beta x+\beta y) }dx dy)$. Then, for $\theta \in[0,1]$:
\begin{equation}
\| J^{\theta s} (e^{(1-\theta) (\beta x+\beta y)}f)\|_{L^2} \leq C \|J^s f \|_{L^2}^\theta \| e^{\beta x+\beta y}f\|_{L^2}^{1-\theta}. \label{interpolation.2}
\end{equation}
\end{lemma}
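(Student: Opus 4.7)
The plan is to establish \eqref{interpolation.1} by complex interpolation via Hadamard's three-line theorem, applied to an analytic family of operators parametrized by $z$ in the closed strip $S=\{z\in\mathbb{C}:0\le\operatorname{Re}(z)\le 1\}$. First I would reduce to the case $f\in C_c^{\infty}(\mathbb{R}^2)$ by density in $H^s\cap L^2(e^{2\beta x}dxdy)$, so that $\widehat f$ is entire and all quantities below depend smoothly (and analytically) on $z$.

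Fix $g\in\mathcal{S}(\mathbb{R}^2)$ with $\|g\|_{L^2}=1$ and define
\[
F(z):=e^{z^2}\,\bigl\langle J^{zs}(e^{(1-z)\beta x}f),\,g\bigr\rangle_{L^2(\mathbb{R}^2)},
\]
where the factor $e^{z^2}$ satisfies $|e^{z^2}|=e^{(\operatorname{Re}z)^2-(\operatorname{Im}z)^2}$, producing a gaussian decay in $|\operatorname{Im}z|$ that will absorb the polynomial growth appearing on one side of the strip. On the line $\operatorname{Re}(z)=0$ the multiplier $(1+|\xi|^2)^{its/2}$ has modulus one, so $J^{its}$ is unitary on $L^2$; combined with $|e^{(1-it)\beta x}|=e^{\beta x}$ this gives $|F(it)|\le e^{-t^2}\|e^{\beta x}f\|_{L^2}\le\|e^{\beta x}f\|_{L^2}$. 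On the line $\operatorname{Re}(z)=1$ I would factor $J^{(1+it)s}=J^{its}J^s$ to remove the imaginary part, then use that multiplication by $e^{-it\beta x}$ shifts the $\xi_1$-variable of the Fourier transform by $t\beta$, together with the Peetre-type inequality $(1+|\xi-t\beta e_1|^2)^s\le C(1+|\xi|^2)^s(1+t^2)^s$, to conclude $\|J^s(e^{-it\beta x}f)\|_{L^2}\le C(1+|t|)^s\|J^sf\|_{L^2}$. Since $|e^{(1+it)^2}|=e^{1-t^2}$ kills this polynomial factor, one obtains $|F(1+it)|\le C_{s,\beta}\|J^sf\|_{L^2}$ uniformly in $t$.

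To invoke the three-line theorem I still need $F$ holomorphic in the interior of $S$ and bounded on $S$: analyticity follows from Paley--Wiener, since for $f\in C_c^{\infty}$ the identity $\widehat{e^{(1-z)\beta x}f}(\xi)=\widehat f(\xi_1+i(1-z)\beta,\xi_2)$ makes the integrand of $F$ entire in $z$ with derivatives dominated by an integrable bound, so differentiation under the integral is justified; boundedness on $S$ follows because the polynomial growth in $|\operatorname{Im}z|$ of $\|J^s(e^{(1-z)\beta x}f)\|_{L^2}$ is again absorbed by $e^{-(\operatorname{Im}z)^2}$. Hadamard's theorem at $z=\theta\in[0,1]$ then yields
\[
|F(\theta)|\le\|e^{\beta x}f\|_{L^2}^{1-\theta}\bigl(C_{s,\beta}\|J^sf\|_{L^2}\bigr)^{\theta};
\]
using $F(\theta)=e^{\theta^2}\langle J^{\theta s}(e^{(1-\theta)\beta x}f),g\rangle$, taking the supremum over unit $g\in L^2$, and extending by density delivers \eqref{interpolation.1}. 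For \eqref{interpolation.2} the argument is identical, replacing $e^{(1-z)\beta x}$ by $e^{(1-z)(\beta x+\beta y)}$; the Fourier shift now acts along the direction $(1,1)$, but the same Peetre inequality bounds the effect.

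I expect the main technical obstacle to be the bookkeeping on the line $\operatorname{Re}(z)=1$: one has to track simultaneously the frequency shift produced by the oscillatory factor $e^{-it\beta x}$ and the modulus of the complex Bessel-type multiplier $(1+|\xi|^2)^{(1+it)s/2}$, and then verify that their combined polynomial growth in $|t|$ is precisely of the order that the gaussian convergence factor $e^{z^2}$ is designed to kill. Everything else (analyticity in $z$ via Paley--Wiener, boundedness in the strip, density of $C_c^{\infty}$ in the weighted Sobolev-type space) is routine.
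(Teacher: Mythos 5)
Your proof is correct and follows exactly the route the paper indicates: the paper gives no details for this lemma, merely noting that it ``can be proved using the Three-line Theorem,'' and your analytic family $F(z)=e^{z^2}\langle J^{zs}(e^{(1-z)\beta x}f),g\rangle$ with the unitary boundary $\operatorname{Re}z=0$, the Peetre-shift boundary $\operatorname{Re}z=1$, and the Gaussian factor absorbing the polynomial growth is the standard way to carry that out. The remaining steps you defer (analyticity via Paley--Wiener for $f\in C_c^{\infty}$ and the density/lower-semicontinuity passage to general $f$) are indeed routine.
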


The exponential decay in Theorem \ref{decay} is obtained in two steps. In the first step we establish the boundedness of $u(t)$ in the space $H^3(e^{2\beta x}dx dy)$, and then, using this fact, we prove the boundedness of $u(t)$ in the space $H^3(e^{2\beta x+2\beta y}dx dy)$. The conclusion of the proof follows from the symmetry properties of the equation.

\begin{lemma}\label{derecha} 
Let $u\in C([0,1];H^{4}(\mathbb{R}^{2}) ) \cap C^{1}([0,1];L^{2}(\mathbb{R}^{2}) )$ be a solution of  $(\ref{ZK}) $ such that
for all $\beta>0$, $u(0) \in L^{2}( e^{2\beta x}dxdy)$. Then $u$ is a bounded function from $[0,1]$ with values in $H^{3}( e^{2\beta x}dxdy) $ for all $\beta >0$.
\end{lemma}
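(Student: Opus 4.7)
The plan is to first obtain a uniform weighted $L^2$ bound, $u(t)\in L^2(e^{2\alpha x}dxdy)$ for every $\alpha>0$ and every $t\in[0,1]$, and then upgrade this to the desired $H^3$ bound by interpolating against the unweighted $H^4$ norm via Lemma \ref{interpolation}.

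For the $L^2$ step, fix $\alpha>0$ and introduce a smooth truncated weight $\varphi_N\in C^\infty(\mathbb R)$ with $\varphi_N(x)=e^{\alpha x}$ for $x\le N$, $\varphi_N$ bounded and nondecreasing, $\varphi_N\uparrow e^{\alpha x}$ pointwise as $N\to\infty$, and $|\varphi_N^{(k)}(x)|\le C_k\alpha^k\varphi_N(x)$ uniformly in $N$. Multiplying \eqref{ZK} by $\varphi_N^2 u$, integrating over $\mathbb R^2$ and carrying out the integrations by parts (legitimate because $\varphi_N$ is bounded and $u\in C([0,1];H^4)\cap C^1([0,1];L^2)$, by density if needed) gives
\[
\tfrac{1}{2}\tfrac{d}{dt}\!\int\!\varphi_N^2u^2 + \tfrac{3}{2}\!\int\!(\varphi_N^2)' u_x^2 + \tfrac{1}{2}\!\int\!(\varphi_N^2)' u_y^2 = \tfrac{1}{2}\!\int\!(\varphi_N^2)''' u^2 + \tfrac{1}{3}\!\int\!(\varphi_N^2)' u^3.
\]
The two quadratic terms on the left have the right sign and may be discarded; the first term on the right is controlled by $C\alpha^3\int\varphi_N^2u^2$, and the cubic term by $C\alpha\,\sup_t\|u(t)\|_{L^\infty_{x,y}}\!\int\varphi_N^2u^2$, where the $L^\infty$ bound is finite thanks to Sobolev embedding and the hypothesis $u\in C([0,1];H^4)$. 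Gronwall's inequality then yields
\[
\int\varphi_N^2u^2(t)\,dxdy \le e^{Ct}\!\int e^{2\alpha x}u^2(0)\,dxdy
\]
uniformly in $N$, and monotone convergence as $N\to\infty$ delivers $u(t)\in L^2(e^{2\alpha x}dxdy)$ uniformly on $[0,1]$ for every $\alpha>0$.

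For the $H^3$ step, given $\beta>0$, apply Lemma \ref{interpolation} with $s=4$, $\theta=3/4$, and base weight $e^{4\beta x}$ (admissible by the previous step applied with $\alpha=4\beta$):
\[
\|J^3(e^{\beta x}u(t))\|_{L^2} \le C\,\|u(t)\|_{H^4}^{3/4}\,\|e^{4\beta x}u(t)\|_{L^2}^{1/4}.
\]
Both factors on the right are bounded uniformly in $t\in[0,1]$. Since derivatives can be redistributed between $e^{\beta x}$ and $u$ by Leibniz at the cost of lower-order terms (already controlled), the quantity $\|J^3(e^{\beta x}u)\|_{L^2}$ is equivalent to $\|u\|_{H^3(e^{2\beta x}dxdy)}$, which finishes the proof.

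The main obstacle is the construction and bookkeeping of the truncated weight $\varphi_N$: to keep the Gronwall constant uniform in $N$ one really needs the pointwise estimates $|\varphi_N^{(k)}|\le C_k\alpha^k\varphi_N$ with constants independent of $N$, not merely pointwise convergence. Once such a $\varphi_N$ is in hand, the energy identity is routine, the only genuinely nonlinear input being the uniform $L^\infty$ bound inherited from the $H^4$ regularity of $u$.
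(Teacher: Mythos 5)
Your proposal is correct and follows essentially the same route as the paper: a truncated, bounded exponential weight satisfying $|\phi_n^{(j)}|\le C\phi_n$ uniformly in the truncation parameter, the same energy identity after multiplying the equation by the weighted solution, Gronwall plus monotone convergence for the weighted $L^2$ bound, and then the interpolation Lemma \ref{interpolation} with $s=4$, $\theta=3/4$ to reach $H^3(e^{2\beta x}dxdy)$. Your remarks about needing the base weight $e^{4\beta x}$ in the interpolation step and about converting $\|J^3(e^{\beta x}u)\|_{L^2}$ into the paper's definition of the weighted $H^3$ norm are correct refinements of details the paper leaves implicit.
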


\begin{proof} We will first prove that $t\mapsto u(t)$ is bounded from $[0,1]$ with values in $L^2(e^{2\beta x}dxdy)$. Let $\varphi\in C^{\infty}(\mathbb{R})$ be a decreasing function with $\varphi(x)=1$ if $x<1$ and $\varphi(x)=0$ if $x>10$. For $n\in\mathbb{N}$ we define
\[\phi_n(x):=e^{2\beta\theta_n(x)}\;,\]
where $\theta_n(x):=\int_0^x\varphi(\frac{x'}{n})dx'$.

It is easily seen that for every $n$, $\phi_n$ is an increasing function, $\phi_n(x)=e^{2\beta x}$ if $x\leq n$, and $\phi_n(x)\equiv d_n\leq e^{20\beta n}$ if $x>10n$. Also, $\phi_n\leq\phi_{n+1}$ for every $n$ and
\[|\phi_n^{(j)}(x)|\leq C_{j, \beta}\phi_n(x)\quad\forall j\in\mathbb{N}\quad\forall x\in\mathbb{R}\;.\]
Multiplying equation \eqref{ZK} by $u\phi_n$ and integrating by parts in $\mathbb{R}_{xy}^2$  we obtain:
\[
\frac{1}{2}\frac{d}{dt}\int u^{2}\phi _{n}+\frac{3}{2}\int ( \partial_{x}u)^{2}\phi_{n}'-\frac{1}{2}\int u^{2}\phi _{n}'''+\frac{1}{2}\int ( \partial_{y}u)^{2}\phi_{n}'-\frac{1}{3}\int u^{3}\phi _{n}'=0,
\]
Therefore, discarding positive terms and applying Sobolev imbeddings,
\begin{align*}\frac12\frac{d}{dt}\int u^2\phi_n&\leq \frac{1}{2}C_{3,\beta}\int u^2\phi_n+ \frac13\|u(t)\|_{L^{\infty}(\mathbb{R}^2)}C_{1,\beta}\int u^2\phi_n\\
&\leq (C_{3,\beta}+C\|u\|_{C([0,1];H^2)})\int u^2\phi_n\equiv C_{\beta,u}\int u^2\phi_n\;,
\end{align*}
and applying Gronwall's lemma and the Monotone Convergence Theorem with $n\to\infty$ we conclude that
\begin{equation}\int u(t)^2e^{2\beta x}dxdy\leq C\int u(0)^2e^{2\beta x}dxdy\quad\forall t\in[0,1]\;,\label{peso1}\end{equation}
which proves that $t\mapsto u(t)$ is bounded from $[0,1]$ with values in $L^2(e^{2\beta x}dxdy)$.\par

Since this boundedness holds for each $\beta>0$, and, on the other hand, $u\in C([0,1];H^4)$, we can apply the interpolation inequality (\ref{interpolation.1}) with $s=4$, $\theta=\frac{3}{4}$, to conclude that $t\mapsto u(t)$ is bounded from $[0,1]$ with values in $H^3(e^{2\beta x} dx dy)$, which completes the proof of Lemma \ref{derecha}.
\end{proof}

\begin{lemma}\label{derecha2}
Let $u\in C([0,1];H^4(\mathbb R^2))\cap C^1([0,1];L^2(\mathbb R^2))$ be a solution of \eqref{ZK}. If $u(0)\in L^2(e^{2\beta x}e^{ 2\beta |y|}dx dy)$ for all $\beta>0$, then $u$ is a bounded function from $[0,1]$ with values in $H^3(e^{2\beta x} e^{2\beta|y|}dx dy)$.
\end{lemma}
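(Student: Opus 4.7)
The plan is to repeat the proof of Lemma \ref{derecha} with the weight $e^{2\beta x}$ replaced by $e^{2\beta x}e^{2\beta|y|}$. Since $e^{2\beta|y|}\le e^{2\beta y}+e^{-2\beta y}$, the hypothesis gives $u(0)\in L^{2}(e^{2\beta x}e^{\pm2\beta y}dxdy)$ for each choice of sign, so it suffices to prove separately, for each sign, the weighted $L^2$ bound
\[
\int u(t)^2\,e^{2\beta x}e^{\pm 2\beta y}\,dxdy\le C\int u(0)^2\,e^{2\beta x}e^{\pm 2\beta y}\,dxdy,\qquad t\in[0,1],
\]
and then recombine the two signs. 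The passage from $L^{2}$ to $H^{3}$ is then an immediate application of the interpolation inequality (\ref{interpolation.2}) with $s=4$, $\theta=3/4$, exactly as in the closing step of Lemma \ref{derecha}.

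For the weighted $L^2$ bound I would multiply \eqref{ZK} by $u\,\psi_n(x)\varphi_m(y)$, where $\psi_n$ is the truncation $\phi_n$ of $e^{2\beta x}$ constructed in Lemma \ref{derecha} and $\varphi_m$ is an analogous truncation of $e^{\pm 2\beta y}$, both satisfying $|\psi_n^{(j)}|\le C_{j,\beta}\psi_n$ and $|\varphi_m^{(j)}|\le C_{j,\beta}\varphi_m$. Integration by parts in $\mathbb{R}^2_{xy}$ reproduces all the terms of Lemma \ref{derecha} decorated by $\varphi_m$, and, from the action of $\partial_y$ on $\varphi_m$ inside the dispersive term $\int(\partial_x\partial_y^2 u)u\psi_n\varphi_m$, produces three new contributions
\[
\tfrac{1}{2}\int(\partial_y u)^2\psi_n'\varphi_m,\qquad\int(\partial_y u)(\partial_x u)\psi_n\varphi_m',\qquad -\tfrac{1}{2}\int u^2\psi_n'\varphi_m''.
\]
The first is a nonnegative term that can be discarded together with $\tfrac{3}{2}\int(\partial_x u)^2\psi_n'\varphi_m$, and the third is dominated by $C_\beta\int u^2\psi_n\varphi_m$.

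The main obstacle, and the only genuinely new feature compared with Lemma \ref{derecha}, is the sign-indefinite cross term $\int(\partial_y u)(\partial_x u)\psi_n\varphi_m'$. I would use the relations $\psi_n'=2\beta\theta_n'\psi_n$ with $\theta_n'\in[0,1]$ and $|\varphi_m'|\le 2\beta\varphi_m$ coming from the explicit form of the truncations, together with a compatible coupling of their scales so that the pointwise bound $\psi_n|\varphi_m'|^2/(\psi_n'\varphi_m)\le C\,\psi_n'\varphi_m$ holds, to write
\[
\Bigl|\int(\partial_y u)(\partial_x u)\psi_n\varphi_m'\Bigr|\le \tfrac{\delta}{2}\int(\partial_x u)^2\psi_n'\varphi_m+\tfrac{1}{2\delta}\int(\partial_y u)^2\psi_n'\varphi_m
\]
by Cauchy--Schwarz. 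Choosing $\delta\in(1,3)$ absorbs both pieces strictly into the positive quadratic terms $\tfrac{3}{2}\int(\partial_x u)^2\psi_n'\varphi_m$ and $\tfrac{1}{2}\int(\partial_y u)^2\psi_n'\varphi_m$ generated by $\partial_x^3 u$ and $\partial_x\partial_y^2 u$. The remaining lower-order terms $\int u^2\psi_n'''\varphi_m$, $\int u^2\psi_n'\varphi_m''$, and $\tfrac{1}{3}\int u^3\psi_n'\varphi_m$ are bounded by $C_{\beta,u}\int u^2\psi_n\varphi_m$ using the pointwise dominations of the weight derivatives and the Sobolev embedding $\|u(t)\|_{L^\infty}\le C\|u\|_{C([0,1];H^2)}$, exactly as in Lemma \ref{derecha}. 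Gronwall's lemma followed by monotone convergence as $n,m\to\infty$ gives the desired weighted $L^2$ estimate, and the interpolation step above finishes the proof.
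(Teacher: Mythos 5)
There is a genuine gap at the crucial absorption step. Your plan hinges on the pointwise comparison $\psi_n\vert\varphi_m'\vert\le C\,\psi_n'\varphi_m$ (equivalently, on controlling $(\partial_y u)(\partial_x u)\psi_n\varphi_m'$ by the quadratic terms carrying the weight $\psi_n'\varphi_m$). This comparison is false for the truncated weights: by construction $\psi_n'(x)=2\beta\varphi(x/n)\psi_n(x)$ vanishes identically for $x>10n$, where $\psi_n\equiv d_n>0$, while $\varphi_m'(y)$ is nonzero on a large set of $y$'s (e.g.\ for all $y<m$ in the case of the weight $e^{2\beta y}$). Hence on the region $\{x>10n\}\times\{\varphi_m'\neq 0\}$ the cross term $\int(\partial_y u)(\partial_x u)\psi_n\varphi_m'$ has a nontrivial integrand while \emph{both} good terms $\tfrac32\int(\partial_x u)^2\psi_n'\varphi_m$ and $\tfrac12\int(\partial_y u)^2\psi_n'\varphi_m$ vanish identically there; no choice of $\delta$ and no coupling of the scales $n,m$ can absorb something into zero. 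This is not a technicality: the positive terms produced by $\partial_x^3u$ and $\partial_x\partial_y^2u$ are proportional to the $x$-derivative of the weight, so any truncation in $x$ kills them exactly where the truncation flattens out, and that is unavoidable if you truncate in both variables simultaneously.

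The paper's proof avoids this by exploiting that Lemma \ref{derecha} has \emph{already} been proved: since $u(t)$ is bounded in $H^3(e^{2\beta x}dxdy)$, one may set $w=e^{\beta x}u$ with the genuine, untruncated exponential in $x$ (all terms of the resulting equation \eqref{w} lie in $L^2$), and then truncate only in $y$. Because the $x$-weight is exact, the terms $3\beta\int(\partial_x w)^2\phi_n(y)$ and $\beta\int(\partial_y w)^2\phi_n(y)$ appear at full strength everywhere, and the single cross term $\int(\partial_y w)(\partial_x w)\phi_n'(y)$, with $\vert\phi_n'\vert\le 2\beta\phi_n$, is absorbed via $3\beta a^2-2\vert a\vert\,\beta\vert b\vert+\beta b^2\ge\beta(\vert a\vert-\vert b\vert)^2\ge 0$. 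If you want to salvage your scheme you should adopt this two-stage structure (which you already have available, since you invoke Lemma \ref{derecha} implicitly through the hypothesis $u(0)\in L^2(e^{2\beta x}dxdy)$), rather than truncating the $x$-weight. A minor additional point: the time-differentiation identity $\frac{d}{dt}\int w^2\phi_n=2\int e^{\beta x}u'w\phi_n$ requires a justification (the paper passes through bounded weights $\phi_m(x)\phi_n(y)$ and a dominated convergence argument), since $e^{2\beta x}\phi_n(y)$ is unbounded.
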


\begin{proof}
Our first step will be to prove that the $u$ is bounded from $[0,1]$ to $L^{2}( e^{2\beta x}e^{2\beta y}dxdy)$.

Since $u(0)\in L^2(e^{2\beta x} e^{2\beta |y|} dx dy)$, then $u(0)\in L^2(e^{2\beta x} dx dy)$, and in consequence, by Lemma \ref{derecha}, $u$ is bounded from $[0,1]$ with values in $H^3(e^{2\beta x}dxdy)$ for all $\beta>0$.

Let $w( t) :=e^{\beta x}u( t) $. Since $u$ is a solution of (\ref{ZK}), it follows that $w$ satisfies the equation 
\begin{equation}
e^{\beta x}u'-\beta ^{3}w+3\beta ^{2}\partial _{x}w-3\beta \partial_{x}^{2}w+\partial_{x}^{3}w-\beta \partial _{y}^{2}w+\partial _{x}\partial_{y}^{2}w-\beta uw +u\partial _{x}w =0\,.\label{w}
\end{equation}
Let us notice that, since $u(t)\in H^3(e^{2\beta x}dxdy)$, and $u$ satisfies equation \eqref{ZK}, all terms in the former equation belong to $L^2(\mathbb R^2)$.

For $n\in \mathbb N$ let us define $\phi_n(y):=e^{2\beta \theta_n(y)}$, where the function $\theta_n$ is the same function defined in the proof of Lemma \ref{derecha}.

Multiplying equation \eqref{w} by $w\phi_n(y)$ and integrating by parts in $ \mathbb{R}^{2}_{xy}$ we obtain:
\[
\int e^{\beta x}u'w\phi _{n}-\beta ^{3}\int w^{2}\phi_{n}+3\beta \int ( \partial_{x}w) ^{2}\phi _{n}+\beta \int( \partial _{y}w) ^{2}\phi _{n}-\frac{1}{2}\beta \int w^{2}\phi_{n}''
\]
\begin{equation}
+\int ( \partial _{y}w) ( \partial _{x}w) \phi_{n}'-\beta \int uw^{2}\phi _{n}-\frac{1}{2}\int w^{2}(\partial _{x}u) \phi _{n}=0.\label{anterior}
\end{equation}

For the first term we will see that $$t\mapsto \int\limits_{\mathbb{R}^{2}}e^{\beta x}u( t) w( t) \phi _{n}( y)dxdy=\int w^{2}\phi _{n}$$ is absolutely continuous in $[ 0,1] $ and that
\begin{equation}
\dfrac{1}{2}\dfrac{d}{dt}\int w^{2}\phi _{n}=\int e^{\beta x}u'w\phi _{n}\text{ } a.e.\text{ }t\in [ 0,1] . \label{9}
\end{equation}

In fact, since $u\in C^1([0,1];L^2(\mathbb R^2))$ and for $m\in \mathbb N$, $\phi_m(x) \phi_n(y) \in L^\infty(\mathbb R^2)$
\begin{equation*}
\frac{d}{dt}\langle u(t),\phi_m(\cdotp_x) \phi_n(\cdotp_y) u(t) \rangle =2 \int u'(t)\phi_m (x) \phi_n(y) u(t).
\end{equation*}

Thus the fundamental theorem of Integral Calculus implies that
\begin{equation*}
\int u(t) \phi_m(x) \phi_n(y) u(t)-\int u(0)\phi_m(x)\phi_n(y)u(0)=2 \int_0^t [\int u'(\tau)\phi_m(x)\phi_n(y)u(\tau)dx dy] d\tau.
\end{equation*}

An easy application of Dominated Convergence Theorem in the former equality gives, when $m$ goes to $\infty$, that
\begin{equation*}
\int u(t) e^{2\beta x}\phi_n(y) u(t)-\int u(0)e^{2\beta x}\phi_n(y)u(0)=2 \int_0^t [\int u'(\tau)e^{2\beta x}\phi_n(y)u(\tau)dx dy] d\tau.
\end{equation*}

which implies \eqref{9}.

Taking into account that $ | \phi _{n}'( y) |= | 2\beta \varphi(\frac{y}{n}) \phi _{n}(y) | \leq 2\beta \phi _{n}( y) $, from \eqref{anterior} and \eqref{9}, it follows that
\begin{align}
\frac12\frac{d}{dt}\int w^{2}\phi _{n} &\leq \beta ^{3}\int w^{2}\phi _{n}-\beta\int ( ( \partial _{x}w) ^{2}-2 | \partial_{x}w| | \partial _{y}w| +( \partial_{y}w) ^{2}) \phi _{n} +\frac12\beta C_{2,\beta }\int w^{2}\phi _{n}\notag \\ &+\beta C \| u\|_{C( [ 0,1] ;H^{2}( \mathbb{R}^{2}) ) }\int w^{2}\phi _{n} +C \| \partial _{x}u\| _{C( [ 0,1];H^{2}( \mathbb{R}^{2}) ) }\int w^{2}\phi _{n}\notag \\
&\equiv C_{\beta,u}\int w^{2}\phi _{n}-\beta\int \bigl( |\partial _{x}w| -|\partial _{y}w|\bigr)^{2}\phi_n\notag\\
&\leq C_{\beta,u}\int w^{2}\phi _{n}\;
\text{a.e. }t\in [ 0,1]\,,\label{exponencial}
\end{align}
which, as in Lemma \ref{derecha}, implies that $u$ is  bounded  from $[ 0,1] $ to $L^{2}( e^{2\beta x}e^{2\beta y}dxdy) $. This, together with the fact that $u\in C([0,1]; H^4)$ and the interpolation inequality (\ref{interpolation.2}) with $s=4$ and $\theta=\frac{3}{4}$, shows that $u$ is bounded from $[0,1]$ with values in $H^3(e^{2\beta x}e^{2\beta y}dxdy)$ for all $\beta>0$.

Finally, if we define $\widetilde u(x,y,t):=u(x,-y,t)$, then $\widetilde u$ is also a solution of (\ref{ZK}), with $\widetilde u(0)\in L^2(e^{2\beta x}e^{2\beta|y|}dxdy)$ and therefore $\widetilde u$ is bounded from $[0,1]$ with values in $H^3(e^{2\beta x}e^{2\beta y}dxdy)$ for all $\beta>0$, i.e. $u$ is bounded from $[0,1]$ with values in $H^3(e^{2\beta x}e^{-2\beta y}dxdy)$; which proves the lemma.

\end{proof}

The proof of Theorem \ref{decay} follows immediately from  Lemma \ref{derecha2} by taking into account that the function defined by 
\begin{equation*}
(x,y,t)\mapsto u(-x,y,1-t)
\end{equation*}
is also a solution of equation \eqref{ZK} satisfying the hypotheses of Lemma \ref{derecha2}. 

\newpage
\section{Estimates of Carleman type (Proof of Theorem \ref{Carleman})}

In the proof of the Carleman's estimate of Theorem \ref{Carleman} we will use the following Lemma:

\begin{lemma}\label{L2}
Let $w\in C^{1}( [ 0,1] ;L^{2}(\mathbb{R}^{2}) ) $ be a function such that for all $\beta >0$, $w$ is bounded from $[ 0,1] $ with values in $L^{2}( e^{2\beta |x| }e^{2\beta | y| }dxdy) $ and $w'\in L^{1}( [ 0,1] ;L^{2}( e^{2\beta|x| }e^{2\beta | y| }dxdy)) $. Then, for all  $\lambda \in \mathbb{R}$ and all $\xi =( \xi _{1},\xi _{2}) \in \mathbb{R}^{2}$, the functions $t\mapsto \widehat{e^{\lambda x}w( t) }( \xi )$ and $t\mapsto \widehat{e^{\lambda y}w( t) }( \xi )$ are absolutely continuous in $[ 0,1] $ with derivatives $\widehat{e^{\lambda x}w'( t)}( \xi ) $ and $\widehat{e^{\lambda y}w'( t) }( \xi ) $ $a.e.$ $t\in [ 0,1] ,$ respectively.
\end{lemma}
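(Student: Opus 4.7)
The plan is to establish the integral identity
\[\widehat{e^{\lambda x}w(t)}(\xi)-\widehat{e^{\lambda x}w(s)}(\xi)=\int_s^t \widehat{e^{\lambda x}w'(\tau)}(\xi)\,d\tau\]
for every $s,t\in[0,1]$, from which absolute continuity of $t\mapsto\widehat{e^{\lambda x}w(t)}(\xi)$ together with the claimed a.e.\ derivative follows by the Lebesgue differentiation theorem, once the right-hand integrand is shown to lie in $L^1([0,1])$. The case of $e^{\lambda y}$ is handled by a symmetric argument, so I only describe the $x$-case.

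The first step is to show that $e^{\lambda x}w(t)$ and $e^{\lambda x}w'(\tau)$ belong to $L^1(\mathbb R^2)$ with suitable control, so that the relevant Fourier transforms reduce to ordinary Lebesgue integrals. Fix any $\beta>|\lambda|$. By Cauchy--Schwarz,
\[\|e^{\lambda x}g\|_{L^1(\mathbb R^2)}\le \|e^{\lambda x-\beta|x|-\beta|y|}\|_{L^2(\mathbb R^2)}\,\|e^{\beta|x|}e^{\beta|y|}g\|_{L^2(\mathbb R^2)}=:C_{\lambda,\beta}\,\|e^{\beta|x|}e^{\beta|y|}g\|_{L^2(\mathbb R^2)},\]
with $C_{\lambda,\beta}<\infty$ since $\beta>|\lambda|$. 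Setting $g=w(t)$ and using the weighted boundedness hypothesis on $w$ gives a uniform bound on $\|e^{\lambda x}w(t)\|_{L^1}$; setting $g=w'(\tau)$ and integrating in $\tau$ yields, via the weighted $L^1$-hypothesis on $w'$,
\[\int_0^1\|e^{\lambda x}w'(\tau)\|_{L^1(\mathbb R^2)}\,d\tau\le C_{\lambda,\beta}\int_0^1\|e^{\beta|x|}e^{\beta|y|}w'(\tau)\|_{L^2(\mathbb R^2)}\,d\tau<\infty.\]
In particular $\tau\mapsto\widehat{e^{\lambda x}w'(\tau)}(\xi)$, being pointwise bounded by $\|e^{\lambda x}w'(\tau)\|_{L^1}$, belongs to $L^1([0,1])$.

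The main step uses the hypothesis $w\in C^1([0,1];L^2(\mathbb R^2))$: for every $\psi\in L^2(\mathbb R^2)$ the scalar function $t\mapsto\langle w(t),\psi\rangle$ is $C^1$ with derivative $\langle w'(t),\psi\rangle$, hence
\[\langle w(t)-w(s),\psi\rangle=\int_s^t\langle w'(\tau),\psi\rangle\,d\tau.\]
I would apply this to the cut-off test function $\psi_R(x,y):=\mathbf 1_{B_R}(x,y)\,e^{\lambda x}e^{-i(x\xi_1+y\xi_2)}$, which lies in $L^2(\mathbb R^2)$ for each ball $B_R$, producing
\[\int_{B_R}\!e^{\lambda x}(w(t)-w(s))e^{-i(x\xi_1+y\xi_2)}\,dxdy=\int_s^t\!\int_{B_R}\!w'(\tau)e^{\lambda x}e^{-i(x\xi_1+y\xi_2)}\,dxdy\,d\tau.\]

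Letting $R\to\infty$ concludes the argument. On the left, the integrand is dominated by $e^{\lambda x}|w(t)-w(s)|\in L^1(\mathbb R^2)$ by the first step, so dominated convergence gives $\widehat{e^{\lambda x}w(t)}(\xi)-\widehat{e^{\lambda x}w(s)}(\xi)$. On the right, for a.e.\ $\tau$ the inner integral tends pointwise to $\widehat{e^{\lambda x}w'(\tau)}(\xi)$ and is bounded in absolute value by $\|e^{\lambda x}w'(\tau)\|_{L^1(\mathbb R^2)}$, a function of $\tau$ shown above to lie in $L^1([0,1])$; two successive applications of dominated convergence then produce $\int_s^t\widehat{e^{\lambda x}w'(\tau)}(\xi)\,d\tau$. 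The resulting identity, together with the $L^1$-integrability of the right-hand integrand, immediately yields absolute continuity of $t\mapsto\widehat{e^{\lambda x}w(t)}(\xi)$ and its a.e.\ derivative $\widehat{e^{\lambda x}w'(t)}(\xi)$. The only technical subtlety is the justification of the two interchanges of limit and integral when $R\to\infty$, which is precisely the role of the weighted hypotheses on $w$ and $w'$.
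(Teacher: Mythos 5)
Your proposal is correct and follows essentially the same route as the paper: both establish the $L^1(\mathbb R^2)$ integrability of $e^{\lambda x}w(t)$ and $e^{\lambda x}w'$ via Cauchy--Schwarz against the weighted $L^2$ hypotheses, pair $w$ with a truncated test function $\mathbf 1_{B_R}e^{\lambda x}e^{-i(x\xi_1+y\xi_2)}$ (the paper uses a square instead of a ball), apply the fundamental theorem of calculus to the resulting $C^1$ scalar function, and pass to the limit $R\to\infty$ by dominated convergence.
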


\begin{proof}

By symmetry, it is sufficient to prove the Lemma only for the weight $e^{\lambda x}$. Using Cauchy-Schwarz inequality, it is easy to see that for all $t\in[0,1]$ and $\lambda \in\mathbb R$, $e^{\lambda x}w( t) \in L^{1}( \mathbb{R}^{2})$, and also that $e^{\lambda x} w' \in L^1 (\mathbb{R}^2 \times [0,1])$ for all $\lambda\in \mathbb R$.

For $R>0$, let $\chi_R$ be the characteristic function of the square $[ -R,R] \times [ -R,R] $. 
Since $w\in C^{1}( [ 0,1] ;L^{2}( \mathbb{R}^{2}) ) $, the function
\begin{equation}
t\mapsto\int\limits_{\mathbb{R}^{2}}e^{-ix\xi _{1}}e^{-iy\xi _{2}}e^{\lambda x}\chi _R( x,y) w( t) (
x,y) dxdy=\left\langle w( t) ,e^{ix\xi _{1}}e^{iy\xi_{2}}e^{\lambda x}\chi _R\right\rangle _{L^{2}( \mathbb{R}^{2}) }
\label{34.1}
\end{equation}
defines a $C^1$ function of the variable $t$ with derivative given by \[
t\mapsto \left\langle w'( t) ,e^{ix\xi _{1}}e^{iy\xi_{2}}e^{\lambda x}\chi _R\right\rangle _{L^{2}( \mathbb{R}^{2}) },
\]
and in consequence
\begin{eqnarray}
\int\limits_{\mathbb{R}^{2}}e^{-ix\xi _{1}}e^{-iy\xi _{2}}e^{\lambda x}\chi _R( x,y) w( t) (
x,y) dxdy &=&\int_{0}^{t}\int\limits_{\mathbb{R}^{2}}e^{-ix\xi _{1}}e^{-iy\xi _{2}}e^{\lambda x}\chi _R( x,y) w'( \tau )( x,y) dxdyd\tau  \nonumber \\
&&+\int\limits_{\mathbb{R}^{2}}e^{-ix\xi _{1}}e^{-iy\xi _{2}}e^{\lambda x}\chi _R( x,y) w( 0) (
x,y) dxdy.  \notag
\end{eqnarray}
The Lemma follows from the former equality by an application of the Lebesgue Dominated Convergence Theorem. \end{proof}

{\it Proof of Theorem \ref{Carleman}:}
\begin{proof}
Let us define $g( t) :=e^{\lambda x}w( t) $ and $h( t) :=e^{\lambda x}( w'( t) +\partial_{x}^{3}w( t) +\partial _{x}\partial _{y}^{2}w( t))$. Then
\begin{equation}
h(t)=e^{\lambda x}w'(t)-\lambda ^{3}g(t)+3\lambda ^{2}\partial_{x}g(t)-3\lambda \partial _{x}^{2}g(t)+\partial _{x}^{3}g(t)-\lambda \partial_{y}^{2}g(t)+\partial _{x}\partial _{y}^{2}g(t).  \label{h}
\end{equation}

From the hypotheses on $w$ it can be seen that all terms in  \eqref{h} are in  $L^{1}( \mathbb{R}^{2})$ for almost every $t\in[0,1]$.
We take the spatial Fourier transform in \eqref{h} and apply Lemma \ref{L2} to obtain that
\begin{equation}
\frac{d}{dt}\widehat{g(t) }( \xi ) +[ -im( \xi ) -a( \xi )] \widehat{g( t) }( \xi )=\widehat{h( t) }( \xi ) ,\quad a.e. \;t\in[0,1],\label{ordinary}
\end{equation}
where
\[
m( \xi ) :=-3\lambda ^{2}\xi _{1}+\xi _{1}^{3}+\xi_{1}\xi _{2}^{2},\quad\text{and}\quad
a( \xi ) :=\lambda ^{3}-3\lambda \xi _{1}^{2}-\lambda\xi _{2}^{2}.
\]
Using \eqref{ordinary}, when $a(\xi)\leq0$, we  have
\[
\widehat{g( t) }( \xi ) =e^{im(\xi) t}e^{a( \xi ) t}\widehat{g( 0) }
( \xi ) +\int_{0}^{t}e^{im( \xi ) (t-\tau) }e^{a( \xi ) (t-\tau)}\widehat{h( \tau ) }( \xi )d\tau , \quad\text{for all }t\in[0,1],
\]
and when $a(\xi)>0$, we choose to write
\[
\widehat{g( t) }( \xi ) =e^{-im( \xi) ( 1-t) }e^{-a( \xi ) (1-t) }\widehat{g( 1) }( \xi )-\int_{t}^{1}e^{-im( \xi ) ( \tau -t)}e^{-a( \xi ) ( \tau -t) }\widehat{h(\tau ) }( \xi ) d\tau  \quad\text{for all }t\in[0,1].
\]
In any case, for all $t\in[0,1]$:
\[
| \widehat{g( t) }( \xi ) | \leq  | \widehat{g( 0) }( \xi ) |+| \widehat{g( 1) }( \xi ) |+\int_{0}^{1}| \widehat{h( \tau ) }( \xi )| d\tau ,
\]
and estimate \eqref{EstimativoCarleman} follows from Plancherel's formula.

The proof of the estimate with the weight $e^{\lambda y}$ is similar.\end{proof}

\section{Proof of Theorem \ref{main}}
\begin{proof}
Let $\widetilde{\phi }\in C^{\infty }( \mathbb{R})$ be a non-decreasing function such that $\widetilde{\phi}( x) =0$ for
$x<0$ and $\widetilde{\phi }( x) =1$ for $x>1$ and, for $R>B$,  let $\phi (x) \equiv \phi _{R}( x):=\widetilde{\phi }(x-R) $ .
We define $w\equiv w_R:= \phi(x)u$, and $v\equiv v_R:=\phi(y)u$. Since $supp\,u(0)$ and $supp\,u(1)$ are compact, from Theorem \ref{decay} and equation \eqref{ZK}, it follows that $w$ and $v$ satisfy the hypotheses of Theorem \ref{Carleman}. 

Taking into account that $w(0)=w(1)=0$, from \eqref{EstimativoCarleman} we conclude that
\begin{align}
\| e^{\lambda x}w\|&\leq \| e^{\lambda x}(w'+\partial _{x}^{3}w+\partial _{x}\partial_{y}^{2}w) \|   \nonumber \\
&=\| e^{\lambda x}( \phi u'+\phi \partial_{x}^{3}u+\phi \partial _{x}\partial _{y}^{2}u+\phi '''u+3\phi ''\partial _{x}u \nonumber +3\phi '\partial _{x}^{2}u+\phi '\partial_{y}^{2}u) \|   \nonumber \\
&\leq \| e^{\lambda x}\phi u\partial _{x}u\| +\| e^{\lambda x}F_{1\phi,u}\|,  \label{42}
\end{align}
where $\phi:=\phi(x)$, $\|\cdot\|:=\|\cdot\|_{L^{2}( \mathbb{R}^{2}\times [ 0,1] ) }$ and
\[
F_{1\phi ,u}:=\phi '''u+3\phi ''\partial_{x}u+3\phi '\partial _{x}^{2}u+\phi '\partial _{y}^{2}u.
\]

Since the derivatives of $\phi$ are supported in the interval $[R,R+1]$, it can be seen that
\begin{equation}
\| e^{\lambda x}F_{1\phi ,u}\|\leq Ce^{\lambda ( R+1)}.
\label{43}
\end{equation}
where $C=C(\| u\|_{C([0,1];H^2)})$, and is independent from $\lambda$ and $R$.
Therefore
\[\| e^{\lambda x}\phi u\|\leq \| e^{\lambda x}\phi u\|\|\partial_x u\|_{L^\infty([R,+\infty)\times\mathbb R\times[0,1])}+ Ce^{\lambda ( R+1)}.
\]
From Theorem \ref{decay}, with $\beta=1$ and Sobolev imbeddings, there exists a constant $C_1$ such that 
\[
| \partial _{x}u( t) (x,y) | \leq C_{1}e^{-x }.
\]
Thus
\begin{equation}
\| e^{\lambda x}\phi u\|\leq C_{1}e^{-R}\| e^{\lambda
x}\phi u\| +Ce^{\lambda ( R+1) }.
\label{ad1}
\end{equation}
Since, from Lemma \ref{derecha} $\|e^{\lambda x}\phi u \|<\infty$, we can absorb the first term on the right hand side of \eqref{ad1} by taking $R>B$ such that $C_1e^{-R}<\frac12$ to obtain that
\begin{equation*}
\| e^{\lambda x}\phi u\|\leq Ce^{\lambda ( R+1) }.
\end{equation*}
And thus, since $\phi(x)=1$ for $x\geq 2R$,
\begin{equation}
e^{2\lambda R}( \int_{0}^{1}\int_{-\infty}^{\infty}\int_{2R}^{\infty }| u( t) ( x,y)| ^{2}dxdydt) ^{1/2}\leq\| e^{\lambda x}\phi u\|\leq \| e^{\lambda x}\phi u\|\leq Ce^{\lambda(R+1)}  \label{46}
\end{equation}
Since \eqref{46} is valid for all $\lambda>0$, $2R>R+1$, and the constant $C$ is independent from $\lambda$, by letting $\lambda\to+\infty$ it follows that
\[
( \int_{0}^{1}\int_{-\infty}^{\infty}\int_{2R}^{\infty }| u( t) ( x,y)| ^{2}dxdydt) ^{1/2}=0.
\]
Thus $u\equiv 0$ in $[ 2R,\infty ) \times\mathbb{R}\times [ 0,1] .$

In a similar way, for $v:=\phi(y)u$, taking into account that $v(0)=v(1)=0$ , an application of the Carleman's estimate \eqref{EstimativoCarleman} with weight $e^{\lambda y}$ gives:
\begin{align*}
\| e^{\lambda y}\phi u\|=\|e^{\lambda y}v\| \leq &\| e^{\lambda y}(v'+\partial _{x}^{3}v+\partial _{x}\partial_{y}^{2}v) \|\\
=&\| e^{\lambda y}(\phi u'+\phi \partial _{x}^{3}u+\phi\partial _{x}\partial _{y}^{2}u+2\phi '\partial _{x}\partial_{y}u+\phi ''\partial _{x}u)\|\\
\leq &\| e^{\lambda y}\phi u\partial _{x}u\|+\| e^{\lambda y}F_{2\phi,u}\|,
\end{align*}
where
\[
F_{2\phi ,u}:=2\phi '\partial _{x}\partial _{y}u+\phi ''\partial _{x}u.
\]
Now we reason as above to conclude that $u\equiv 0$ in $\mathbb{R}\times[ 2R,\infty )\times [ 0,1] $.

Finally, we notice that the function $(x,y,t)\mapsto u(-x,-y,1-t)$ also satisfies the hypotheses of Theorem \eqref{main}, which, by the former procedure, implies that $u\equiv 0$ in  $ (-\infty, -2R ]\times\mathbb{R}\times [ 0,1] \,\cup\,\mathbb{R}\times( -\infty,-2R]\times [ 0,1] $.

In this manner, there exists $R>0$ such that $supp\,u(t)\subset [-2R,2R]\times[-2R,2R]$ for all $t\in[0,1]$. Then, by Theorem 
\ref{Panthee}, $u\equiv 0$.
\end{proof}

\end{document}